\newtheorem{thm}{Theorem}[section]
\newtheorem{lem}[thm]{Lemma}
\newtheorem{prop}[thm]{Proposition}
\theoremstyle{definition}
\newcommand{\C}{{\mathbb C}}
\newcommand{\D}{{\mathbb D}}
\newcommand{\R}{{\mathbb R}}
\newcommand{\T}{{\mathbb T}}
\newcommand{\Z}{{\mathbb Z}}
\newcommand{\La}{\Lambda}
\newcommand{\f}{\frac}
\newcommand{\ov}{\overline}
\newcommand{\al}{\alpha}
\newcommand{\be}{\beta}
\newcommand{\ga}{\gamma}
\newcommand{\la}{\lambda}
\newcommand{\ze}{\zeta}
\renewcommand{\th}{\theta}
\newcommand{\ph}{\varphi}
\newcommand{\const}{\text{\rm const}}
\numberwithin{equation}{section}
\title[Functions with small and large spectra]
{Functions with small and large spectra\\ 
as (non)extreme points in subspaces of $H^\infty$}
\dedicatory{Dedicated to Nikolai Kapitonovich Nikolski on the occasion of his 80th birthday}
\author{Konstantin M. Dyakonov}
\address{Departament de Matem\`atiques i Inform\`atica, IMUB, BGSMath, Universitat de Barcelona, Gran Via 585, E-08007 Barcelona, Spain}
\address{ICREA, Pg. Llu\'is Companys 23, E-08010 Barcelona, Spain}
\email{konstantin.dyakonov@icrea.cat}
\keywords{Bounded analytic functions, spectral gaps, lacunary polynomials, extreme points}
\subjclass[2010]{30C10, 30H10, 42A05, 46A55}
\thanks{Supported in part by grant MTM2017-83499-P from El Ministerio de Ciencia e Innovaci\'on (Spain) and grant 2017-SGR-358 from AGAUR (Generalitat de Catalunya).}
\begin{document}
\begin{abstract}
Given a subset $\Lambda$ of $\mathbb Z_+:=\{0,1,2,\dots\}$, let $H^\infty(\Lambda)$ denote the space of bounded analytic functions $f$ on the unit disk whose coefficients $\widehat f(k)$ vanish for $k\notin\Lambda$. Assuming that either $\Lambda$ or $\mathbb Z_+\setminus\Lambda$ is finite, we determine the extreme points of the unit ball in $H^\infty(\Lambda)$.
\end{abstract}

\maketitle

\section{Introduction}

Let $H^\infty$ stand for the space of bounded holomorphic functions on the disk $\D:=\{z\in\C:|z|<1\}$. As usual, a function $f\in H^\infty$ is identified with its boundary trace on the circle $\T:=\partial\D$, defined almost everywhere in the sense of nontangential convergence. We thus embed $H^\infty$ in $L^\infty=L^\infty(\T)$, the space of essentially bounded functions on $\T$, bearing in mind that the quantity 
$$\|f\|_\infty:=\sup\{|f(z)|:\,z\in\D\}$$
agrees, for $f\in H^\infty$, with the $L^\infty$ norm of the boundary function $f\big|_\T$. The underlying theory and other basic facts about $H^\infty$ can be found in any of \cite{G, Hof, K}. 

\par We shall be concerned with the geometry of the unit ball---specifically, with the structure of its extreme points---in certain subspaces of $H^\infty$. These will appear shortly, once a bit of terminology and notation is fixed. 

\par Given a (complex) Banach space $X=(X,\|\cdot\|)$, we write  
$$\text{\rm ball}(X):=\{x\in X:\,\|x\|\le1\}$$
for the closed unit ball of $X$. Also, we recall that a point in $\text{\rm ball}(X)$ is said to be {\it extreme} for the ball if it is not the midpoint of any two distinct points in $\text{\rm ball}(X)$. 

\par Further, with an integrable function $f$ on $\T$ we associate the sequence of its {\it Fourier coefficients}
$$\widehat f(k):=\f 1{2\pi}\int_\T\ov\ze^kf(\ze)\,|d\ze|,\qquad k\in\Z,$$
and the set 
$$\text{\rm spec}\,f:=\{k\in\Z:\,\widehat f(k)\ne0\},$$
known as the {\it spectrum} of $f$. Thus, in particular, 
$$H^\infty=\{f\in L^\infty:\,\text{\rm spec}\,f\subset\Z_+\},$$
where $\Z_+$ stands for the set of nonnegative integers. 

\par The geometry of the unit ball in $H^\infty$, let alone $L^\infty$, seems to be well understood. To begin with, it is worth mentioning that the extreme points of $\text{\rm ball}(L^\infty)$ are precisely the unimodular functions on $\T$. As to $\text{\rm ball}(H^\infty)$, its extreme points are characterized among the unit-norm functions $f\in H^\infty$ by the weaker condition that 
\begin{equation}\label{eqn:logintdiv}
\int_\T\log(1-|f(\ze)|)\,|d\ze|=-\infty
\end{equation}
(see, e.g., \cite[Section V]{dLR} or \cite[Chapter 9]{Hof}). 

\par Our purpose here is to see what happens for subspaces of $H^\infty$ that are formed by functions with prescribed spectral gaps. Precisely speaking, given a subset $\Lambda$ of $\mathbb Z_+$, we consider the space
$$H^\infty(\Lambda):=\{f\in H^\infty:\,\text{\rm spec}\,f\subset\La\},$$
with norm $\|\cdot\|_\infty$, and we seek to characterize the extreme points of $\text{\rm ball}(H^\infty(\Lambda))$. This will be accomplished in two special cases that represent two \lq\lq extreme" situations. Namely, it will be assumed that either $\La$ or $\Z_+\setminus\La$ is a finite set (this dichotomy accounts for the phrase \lq\lq small and large spectra" in the paper's title). The results pertaining to each of these cases will be stated in Section 2 below, and then proved in Sections 3 and 4. 

\par Meanwhile, we mention that similar questions have already been studied in the context of the Hardy space $H^1$. The extreme points of $\text{\rm ball}(H^1)$ were identified by de Leeuw and Rudin \cite{dLR} as outer functions of norm $1$. The case of 
$$H^1(\La):=\{f\in H^1:\,\text{\rm spec}\,f\subset\La\}$$
was recently settled by the author for sets $\La\subset\Z_+$ that are either finite (see \cite{DAdv2021}) or have finite complement in $\Z_+$ (see \cite{DCRM, DARX}). Among the finite $\La$'s, we single out the \lq\lq gapless" sets of the form
\begin{equation}\label{eqn:lan}
\La_N:=\{0,1,\dots,N\},
\end{equation}
with $N$ a positive integer, in which case we are dealing with the space of polynomials of degree at most $N$. For this last space, endowed with the $L^1$ norm over $\T$, the extreme points of the unit ball were described earlier in \cite{DMRL2000}; alternatively, the description follows from \cite[Theorem 6]{DPAMS}. 

\par Going back to the $H^\infty(\La)$ setting, we remark that the nonlacunary polynomial case, where $\La=\La_N$, was treated previously in \cite{DMRL2003}. When moving to general finite sets $\La$, however, we have to face new complications. For spaces of trinomials, which arise when $\#\La=3$, a detailed analysis was carried out by Neuwirth in \cite{Neu}; there, both the extreme and exposed points of the unit ball were determined. (By definition, given a Banach space $X$, a point $x\in\text{\rm ball}(X)$ is {\it exposed} for the ball if there exists a functional $\phi\in X^*$ of norm $1$ such that the set $\{y\in\text{\rm ball}(X):\phi(y)=1\}$ equals $\{x\}$.) On the other hand, a theorem of Amar and Lederer (see \cite{AL}) tells us that the exposed points of $\text{\rm ball}(H^\infty)$ are precisely the unit-norm functions $f\in H^\infty$ for which the set $\{\ze\in\T:|f(\ze)|=1\}$ has positive measure. 

\par Here, we make no attempt to characterize the exposed points of $\text{\rm ball}(H^\infty(\La))$. Rather, we mention this as an open problem. When $\#\La<\infty$ or $\#(\Z_+\setminus\La)<\infty$, one might probably arrive at a solution with relatively light machinery, via a suitable adaptation of our current techniques. 

\par Restricting our attention to the extreme points of $\text{\rm ball}(H^\infty(\La))$, as we do here, we are still puzzled by the case where both $\La$ and $\Z_+\setminus\La$ are infinite sets. It would be nice to gain some understanding of what happens for such $\La$'s. In particular, we wonder which arithmetic properties of $\La$ (if any) are relevant to the problem. A more specific question related to condition \eqref{eqn:logintdiv} is raised in Section 2 below, next to Theorem \ref{thm:fincodim}. 

\par Finally, we mention yet another type of subspaces in $H^\infty$ where the structure of the extreme points remains unclear. Namely, given an inner function $\th$, we consider the {\it model subspace} $K_\th^\infty:=H^\infty\cap\th\ov z\ov{H^\infty}$ and ask for a characterization of the extreme points of $\text{\rm ball}(K_\th^\infty)$. This problem was originally posed in \cite{DIEOT}; see also \cite{DPAMS} for a treatment of its $L^1$ counterpart, where a simple solution is available. Except for the case of $\th(z)=z^{N+1}$, when $K_\th^\infty$ agrees with $H^\infty(\La_N)$, the two types of spaces (i.e., $H^\infty(\La)$ and $K_\th^\infty$) are rather different in nature, though.

\section{Statement of results}

\par We begin with the case where $\Z_+\setminus\La$ is finite, since a neater formulation is then available and the result is easier to establish. In fact, the extreme point criterion that arises in this case for $H^\infty(\La)$ turns out to be the same as for $H^\infty$. 

\begin{thm}\label{thm:fincodim} Let $\La\subset\Z_+$ be a set with
\begin{equation}\label{eqn:complfin}
\#(\Z_+\setminus\La)<\infty.
\end{equation}
Suppose further that $f\in H^\infty(\La)$ and $\|f\|_\infty=1$. Then $f$ is an extreme point of $\text{\rm ball}(H^\infty(\La))$ if and only if it satisfies \eqref{eqn:logintdiv}.
\end{thm}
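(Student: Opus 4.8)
The plan is to treat the two implications of the stated equivalence separately. The implication ``$f$ satisfies \eqref{eqn:logintdiv} $\implies$ $f$ is extreme'' requires no hypothesis on $\La$ at all: one simply recalls the classical criterion quoted in the Introduction (de Leeuw--Rudin, Hoffman), namely that a unit-norm function $f\in H^\infty$ obeying \eqref{eqn:logintdiv} is an extreme point of $\text{\rm ball}(H^\infty)$. Since $H^\infty(\La)$ is a linear subspace of $H^\infty$, we have $\text{\rm ball}(H^\infty(\La))\subset\text{\rm ball}(H^\infty)$, and a point that is extreme for a convex set remains extreme for every convex subset to which it belongs. As $f\in\text{\rm ball}(H^\infty(\La))$, it is therefore extreme there.

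For the reverse implication I would argue contrapositively, and this is the only place where \eqref{eqn:complfin} enters. Suppose $\int_\T\log(1-|f|)\,|d\ze|>-\infty$. Because the integrand is nonpositive (recall $|f|\le1$ a.e.\ since $\|f\|_\infty=1$), this forces $\log(1-|f|)\in L^1(\T)$ and $1-|f|>0$ a.e.\ on $\T$. Let $\Phi$ be the outer function with $|\Phi|=1-|f|$ a.e.\ on $\T$; as $0<1-|f|\le1$, the quantity $\log|\Phi(z)|$ is the Poisson integral of a nonpositive function, so $\Phi\in H^\infty$ with $\|\Phi\|_\infty\le1$ and $\Phi\not\equiv0$. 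Now invoke \eqref{eqn:complfin} to choose $m\in\Z_+$ with $\{m,m+1,\dots\}\subset\La$, and put $h:=z^m\Phi$. Then $\text{\rm spec}\,h\subset\{m,m+1,\dots\}\subset\La$, so $h\in H^\infty(\La)$, while $h\not\equiv0$ and $|h|=|\Phi|=1-|f|$ a.e.\ on $\T$. Hence $|f\pm h|\le|f|+|h|=1$ a.e., so both $f+h$ and $f-h$ belong to $\text{\rm ball}(H^\infty(\La))$; since $f=\frac12[(f+h)+(f-h)]$ with $f+h\ne f-h$, the point $f$ is not extreme.

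I do not anticipate a genuine obstacle here: the theorem is essentially a soft consequence of the known description of $\text{\rm ball}(H^\infty)$. The one delicate step is the passage that replaces the outer function $\Phi$---which a priori may carry nonzero Fourier coefficients on the finite set $\Z_+\setminus\La$---by a function of the same boundary modulus whose spectrum lies in $\La$; multiplication by $z^m$ accomplishes exactly this, and it works precisely because $\La$ omits only finitely many nonnegative integers. The genuinely harder case, where $\La$ itself is finite and such a simple spectral shift is unavailable, is the one that will demand more work in the sections that follow.
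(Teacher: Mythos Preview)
Your proof is correct, and in fact your converse direction is simpler than the paper's. Both arguments start from the outer function $g$ (your $\Phi$) with boundary modulus $1-|f|$ and then modify it to land in $H^\infty(\La)$. You do this by a spectral shift: multiply by $z^m$ with $m$ large enough that $\{m,m+1,\dots\}\subset\La$. The paper instead sets $m:=\#(\Z_+\setminus\La)$, writes $\Z_+\setminus\La=\{k_1,\dots,k_m\}$, and uses rank--nullity on the map $p\mapsto\bigl(\widehat{gp}(k_1),\dots,\widehat{gp}(k_m)\bigr)$ from $\mathcal P_m$ to $\C^m$ to produce a nonzero polynomial $p_0\in\mathcal P_m$ with $gp_0\in H^\infty(\La)$. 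Your route avoids the linear-algebra detour entirely and is the more direct argument for this particular theorem. The paper's multiplier-by-dimension-count technique, on the other hand, does not rely on $\La$ containing a full tail of $\Z_+$, so it is the more portable idea---it is the sort of argument one might hope to adapt to sets $\La$ whose complement is merely sparse rather than finite.
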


\par It would be interesting to find a complete description of the sets $\La\subset\Z_+$ with the property that the extreme points of $\text{\rm ball}(H^\infty(\La))$ are characterized by \eqref{eqn:logintdiv}. One feels that such $\La$'s should be suitably \lq\lq thick" in $\Z_+$, but the sufficient condition \eqref{eqn:complfin} is certainly far from being necessary. It seems plausible that an appropriate sparseness condition on $\Z_+\setminus\La$ would actually suffice. At the same time, for a set $\La$ with the desired property, it may well happen that $\Z_+\setminus\La$ is no thinner (in whatever sense) than $\La$ itself, as we shall now see. 

\par By way of example, take $\La$ to be $2\Z_+$, the set of nonnegative even integers. Now let $f\in H^\infty(2\Z_+)$ be a function with $\|f\|_\infty=1$. Assuming that 
$$\int_\T\log(1-|f(\ze)|)\,|d\ze|>-\infty,$$
we put
\begin{equation}\label{eqn:outfun}
g(z):=\exp\left\{\f1{2\pi}\int_\T\f{\ze+z}{\ze-z}\log(1-|f(\ze)|)\,|d\ze|\right\},
\qquad z\in\D,
\end{equation}
so that $g$ is the outer function with modulus $1-|f|$ on $\T$. Furthermore, $g$ is an even function in $H^\infty$ (because $f$ is even) and hence $g\in H^\infty(2\Z_+)$. Also, $\|f\pm g\|_\infty\le1$. Thus, $f+g$ and $f-g$ are two distinct points of $\text{\rm ball}(H^\infty(2\Z_+))$, while $f$ is their midpoint. This proves the necessity of \eqref{eqn:logintdiv} in order that $f$ be an extreme point of $\text{\rm ball}(H^\infty(2\Z_+))$. The sufficiency is trivial, since $H^\infty(2\Z_+)\subset H^\infty$. 

\par We now mention an analogue of Theorem \ref{thm:fincodim} where the underlying space is taken to be the {\it disk algebra} $C_A:=H^\infty\cap C(\T)$ instead of $H^\infty$. This time, $H^\infty(\La)$ gets replaced by 
$$C_A(\La):=H^\infty(\La)\cap C_A$$ 
and we have the following result. 

\begin{prop}\label{prop:fincodimca} Given a set $\La\subset\Z_+$ satisfying \eqref{eqn:complfin}, the extreme points of $\text{\rm ball}(C_A(\La))$ are precisely the unit-norm functions $f\in C_A(\La)$ with property \eqref{eqn:logintdiv}.
\end{prop}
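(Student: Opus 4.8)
The plan is to imitate the proof of Theorem \ref{thm:fincodim}, the only new ingredient being a disk-algebra substitute for the outer-function construction used there. The sufficiency of \eqref{eqn:logintdiv} needs neither that substitute nor any assumption on $\La$: since $\text{\rm ball}(C_A(\La))=\text{\rm ball}(H^\infty)\cap C_A(\La)$, a unit-norm $f\in C_A(\La)$ satisfying \eqref{eqn:logintdiv} is extreme in $\text{\rm ball}(H^\infty)$ by the characterization recalled in the Introduction, hence extreme in the smaller ball.

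For the necessity, suppose $f\in C_A(\La)$ has $\|f\|_\infty=1$ while $\int_\T\log(1-|f|)\,|d\ze|>-\infty$, and put $w:=1-|f|$---a nonnegative continuous function on $\T$ with $\log w\in L^1(\T)$. As $\Z_+\setminus\La$ is finite, fix $M\ge0$ with $\{M,M+1,\dots\}\subset\La$, so that $z^MC_A\subset C_A(\La)$. It then suffices to find $g_1\in C_A$, $g_1\not\equiv0$, with $|g_1|\le w$ on $\T$: for $g:=z^Mg_1$ one gets $g\in C_A(\La)\setminus\{0\}$ and, on $\T$, $|f\pm g|\le|f|+|g_1|\le|f|+(1-|f|)=1$, so that $f+g$ and $f-g$ are two distinct points of $\text{\rm ball}(C_A(\La))$ with midpoint $f$.

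To build $g_1$, recall that the outer function with modulus $w$ need not be continuous on $\ov\D$, because for a merely continuous $w$ the conjugate function of $\log w$ may be discontinuous. One therefore passes to a regular minorant: one constructs $v$ on $\T$ with $0\le v\le w$, $v\not\equiv0$, $\log v\in L^1(\T)$, and $v$ of class $C^\infty$ on the open set $\{v>0\}$, and takes $g_1$ to be the outer function with modulus $v$. Such a $v$ is obtained by choosing a Whitney-type decomposition of $\T\setminus\{w=0\}$ into arcs $Q$ with $\operatorname{length}(Q)\asymp\dist(Q,\{w=0\})$, replacing $w$ on each $Q$ by a smooth function $w_Q$ with $0<w_Q\le w$ there and $\int_{Q}\bigl(\log w-\log w_Q\bigr)\,|d\ze|$ small---possible since $\log w$ is continuous and finite on that (compact) arc, by Weierstrass approximation---and glueing the $w_Q$ by a smooth partition of unity subordinate to the decomposition. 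The bounded overlap of Whitney arcs then yields $\int_\T(-\log v)\,|d\ze|\le C\int_\T(-\log w)\,|d\ze|<\infty$. Finally $g_1\in C_A$: at a point of $\{w=0\}$ the Poisson integral of $\log v$ tends to $-\infty$, so $g_1$ extends continuously there by the value $0$; at a point where $v>0$ the function $\log v$ is $C^\infty$ on a neighbourhood, so its conjugate is continuous there as well, by Privalov's theorem together with the locality of conjugation modulo real-analytic functions.

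I expect this last construction to be the main obstacle: a crude (say, globally Lipschitz) regularization of $w$ can thicken $\{w=0\}$, or otherwise force $\log v$ out of $L^1$, so the smoothing must be performed at the Whitney scale adapted to $\dist(\cdot,\{w=0\})$. Once one has the lemma that any nonnegative continuous $w$ on $\T$ with $\log w\in L^1$ satisfies $|g_1|\le w$ on $\T$ for some $g_1\in C_A\setminus\{0\}$ (a fact that is essentially classical), Proposition \ref{prop:fincodimca} follows from the preceding two paragraphs; in particular, taking $\La=\Z_+$ identifies the extreme points of $\text{\rm ball}(C_A)$ itself.
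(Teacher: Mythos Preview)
Your argument is correct and runs parallel to the paper's, but differs in two places worth noting. To push the perturbing function into $C_A(\La)$, the paper repeats the rank--nullity trick from the proof of Theorem~\ref{thm:fincodim}: given $g\in C_A$ with $|g|\le1-|f|$, it finds a polynomial $p_0\in\mathcal P_m$, $m=\#(\Z_+\setminus\La)$, with $gp_0\in C_A(\La)$. You instead observe that $z^MC_A\subset C_A(\La)$ once $M$ exceeds every element of $\Z_+\setminus\La$, and simply take $g=z^Mg_1$; this is a genuine and pleasant simplification. On the other hand, for the existence of a non-null $g\in C_A$ with $|g|\le1-|f|$ on $\T$, the paper just cites \cite[Chapter~9]{Hof} (this is exactly the classical non-extreme criterion for $\text{\rm ball}(C_A)$), whereas you sketch a Whitney-scale smoothing of $w=1-|f|$ and appeal to locality of the conjugate operator. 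Your outline is sound---the upper-semicontinuity argument forces $|g_1|\to0$ at zeros of $w$, and locality of the Hilbert transform gives continuity on $\{v>0\}$---but it is considerably more work than the citation, and the step ``$\int_\T(-\log v)\le C\int_\T(-\log w)$'' after glueing deserves a word (it uses concavity of $\log$ together with bounded overlap). Since you already call the lemma ``essentially classical,'' you could safely replace the construction by the same reference and keep the cleaner $z^M$ reduction.
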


\par Next, we turn to the case where $\La$ is a finite subset of $\Z_+$. The $H^\infty$ functions with spectrum in $\La$ are now polynomials of the form 
$$p(z)=\sum_{k\in\La}\widehat p(k)z^k,$$
and we prefer to denote the set of such polynomials by $\mathcal P(\La)$ rather than by $H^\infty(\La)$. Of course, $\mathcal P(\La)$ is still endowed with the supremum norm, and we shall occasionally write $\mathcal P^\infty(\La)$ for the normed space $(\mathcal P(\La),\|\cdot\|_\infty)$ that arises. 

\par We shall henceforth assume (without losing anything of substance) that $0\in\La$ and 
$\#\La\ge2$, so that 
\begin{equation}\label{eqn:deflambda}
\La=\{0,1,\dots,N\}\setminus\{k_1,\dots,k_M\}
\end{equation}
for some positive integers $N$ and $k_j$ ($j=1,\dots,M$) with 
$$k_1<k_2<\dots<k_M<N.$$
In the special case where $M=0$, the set $\{k_1,\dots,k_M\}$ is empty, so $\La$ becomes $\La_N$ (as defined by \eqref{eqn:lan}) and $\mathcal P(\La)$ reduces to 
\begin{equation}\label{eqn:pollan}
\mathcal P_N:=\mathcal P(\La_N),
\end{equation}
the space of polynomials of degree at most $N$. In this nonlacunary case, the extreme points of the unit ball were previously characterized in \cite{DMRL2003}. Here, we refine the method of \cite{DMRL2003} to deal with spaces of {\it lacunary polynomials} (or {\it fewnomials}) that arise as $\mathcal P(\La)$ for general sets $\La$ of the form \eqref{eqn:deflambda}. 

\par Among the unit-norm polynomials in $\mathcal P^\infty(\La)$, the simplest examples are provided by the {\it monomials} $z\mapsto cz^k$, with $k\in\La$ and $c$ a unimodular constant. Clearly, any such monomial is an extreme point of $\text{\rm ball}(\mathcal P^\infty(\La))$, so we may exclude these \lq\lq trivial" extreme points from further consideration. 

\par Now suppose $p\in\mathcal P(\La)$ is a polynomial with $\|p\|_\infty=1$ whose spectrum contains at least two elements. Our criterion for $p$ to be extreme in $\text{\rm ball}(\mathcal P^\infty(\La))$ will be stated in terms of a certain matrix $\mathcal M=\mathcal M_\La(p)$ associated with $p$, and we proceed with the construction of $\mathcal M$. 

\par Let $\ze_1,\dots,\ze_n$ be an enumeration of the (finite and nonempty) set $\{\ze\in\T:|p(\ze)|=1\}$. Viewed as zeros of the function 
$$\tau(z):=1-|p(z)|^2,\qquad z\in\T$$
(or equivalently, of the polynomial $z^N\tau$), the $\ze_j$'s have even multiplicities, which we denote by $2\mu_1,\dots,2\mu_n$ respectively; the $\mu_j$'s are therefore positive integers. We then put
\begin{equation}\label{eqn:mugadef}
\mu:=\sum_{j=1}^n\mu_j\quad\text{\rm and}\quad\ga:=\mu/2.
\end{equation}
Since $z^N\tau\in\mathcal P_{2N}$, it follows that $\mu\le N$. 

\par For each $j\in\{1,\dots,n\}$, we consider the Wronski-type matrix 
$$W_j:=
\begin{pmatrix}
\ov\ze_j^\ga p(\ze_j) & \ov\ze_j^{\ga+1}p(\ze_j) & \dots & \ov\ze_j^{N-\ga}p(\ze_j)\\
\left(\ov z^\ga p\right)'(\ze_j) & \left(\ov z^{\ga+1}p\right)'(\ze_j) & \dots 
& \left(\ov z^{N-\ga}p\right)'(\ze_j)\\
\vdots & \vdots & \vdots & \vdots\\
\left(\ov z^\ga p\right)^{(\mu_j-1)}(\ze_j) & \left(\ov z^{\ga+1}p\right)^{(\mu_j-1)}(\ze_j) & \dots & \left(\ov z^{N-\ga}p\right)^{(\mu_j-1)}(\ze_j)
\end{pmatrix},
$$
which has $\mu_j$ rows and $N-\mu+1$ columns (indeed, the exponent $N-\ga$ in the last column equals $\ga+N-\mu$). Here, the convention is that the independent variable $z=e^{it}$ lives on $\T$ and that differentiation is with respect to the real parameter $t=\arg z$. More precisely, expressions of the form $(\ov z^{\ga+\ell}p)^{(s)}(\ze_j)$ with $\ell,s\in\Z_+$ should be interpreted as 
$$\f{d^s}{dt^s}\left\{e^{-i(\ga+\ell)t}p(e^{it})\right\}\big|_{t=t_j},$$
where $t_j\in(-\pi,\pi]$ is defined by $e^{it_j}=\ze_j$. We also need the real matrices 
$$\mathcal U_j:=\text{\rm Re}\,W_j\quad\text{\rm and}\quad\mathcal V_j:=\text{\rm Im}\,W_j
\qquad(j=1,\dots n).$$

\par The rest of the construction involves the polynomial 
\begin{equation}\label{eqn:polyrz}
r(z):=\prod_{j=1}^n(z-\ze_j)^{\mu_j}
\end{equation}
and its coefficients $\widehat r(k)$ with $k\in\Z$. (For $k<0$ and $k>\mu$, we obviously have $\widehat r(k)=0$.) From these, some further matrices will be built. Namely, we introduce the $M\times(N-\mu+1)$ matrix 
$$\mathcal R:=
\begin{pmatrix}
\widehat r(k_1) & \widehat r(k_1-1) & \dots & \widehat r(k_1-N+\mu)\\
\vdots & \vdots & \vdots & \vdots\\
\widehat r(k_M) & \widehat r(k_M-1) & \dots & \widehat r(k_M-N+\mu)
\end{pmatrix}
$$
along with the real matrices 
$$\mathcal A:=\text{\rm Re}\,\mathcal R\,\quad\text{\rm and}\quad
\mathcal B:=\text{\rm Im}\,\mathcal R.$$
Finally, we define the block matrix 
\begin{equation}\label{eqn:blockma}
\mathcal M=\mathcal M_\La(p):=
\begin{pmatrix}
\mathcal A & -\mathcal B\\
\mathcal B & \mathcal A\\
\mathcal U_1 & \mathcal V_1\\
\vdots & \vdots\\
\mathcal U_n & \mathcal V_n
\end{pmatrix},
\end{equation}
which has $2M+\mu$ rows and $2(N-\mu+1)$ columns. 

\begin{thm}\label{thm:findim} Given a set $\La\subset\Z_+$ of the form \eqref{eqn:deflambda}, 
suppose that $p$ is a unit-norm polynomial in $\mathcal P^\infty(\La)$ distinct from a monomial. Then $p$ is an extreme point of $\text{\rm ball}(\mathcal P^\infty(\La))$ if and only if $\text{\rm rank}\,\mathcal M_\La(p)=2(N-\mu+1)$.
\end{thm}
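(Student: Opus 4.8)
The plan is to characterize non-extremality of $p$ by the existence of a nonzero "perturbation" polynomial $q\in\mathcal P(\La)$ with $\|p\pm q\|_\infty\le1$, and then to translate this into a linear-algebra condition. The key observation is that $\|p\pm q\|_\infty\le1$ is equivalent to $|p(\ze)\pm q(\ze)|^2\le1$ for all $\ze\in\T$, i.e. $2\,\Re(\ov{p}q)(\ze)\pm|q(\ze)|^2\le 1-|p(\ze)|^2=\tau(\ze)$ on $\T$. Since $\tau$ has a zero of order exactly $2\mu_j$ at each $\ze_j$, near these points the quadratic term $|q|^2$ is negligible only if $q$ itself vanishes to sufficiently high order; the right normalization makes this a condition of order $\mu_j$. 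So first I would show that a nonzero admissible $q$ exists if and only if there is a nonzero $q\in\mathcal P(\La)$ such that (a) the trigonometric polynomial $\Re(\ov{p}q)$ vanishes to order $\ge\mu_j$ at each $t_j$, and (b) a certain global positivity/divisibility condition holds forcing $\ov{p}q$ to be divisible, as a Laurent polynomial on $\T$, by $r\ov r/(\text{something})$ — more precisely that $z^{-\ga}p(z)^{-1}\cdot(\text{the Fejér–Riesz-type factor})$ structure lets us write things in terms of $r(z)=\prod(z-\ze_j)^{\mu_j}$. The upshot, following the scheme of \cite{DMRL2003}, is that $p$ is non-extreme iff the real-linear system expressing these vanishing conditions on the coefficient vector of $q$ has a nonzero solution.

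Next I would set up coordinates. Write $h:=\ov z^{\ga}p\cdot q$, a Laurent polynomial supported in exponents from $-\ga+0=\ga'$ up to $\ga+2N-\ldots$; the normalization by $\ov z^\ga$ is chosen (as in the statement) so that the relevant window is symmetric and the admissible "free" part of $q$ corresponds to a vector of length $N-\mu+1$ — matching the column count $2(N-\mu+1)$ after splitting into real and imaginary parts. Condition (a), the vanishing of $\Re h$ and its $t$-derivatives up to order $\mu_j-1$ at $t_j$, is exactly the statement that the real vector built from $q$'s coefficients is annihilated by the rows $(\mathcal U_j\ \mathcal V_j)$: this is where the Wronski-type matrices $W_j$ enter, since $\left(\ov z^{\ga+\ell}p\right)^{(s)}(\ze_j)$ are precisely the entries coupling the $\ell$-th coefficient of the free part of $q$ to the $s$-th derivative of $h$ at $\ze_j$. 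Condition (b), that $\ov z^\ga p\,q$ be divisible by $r$ in the appropriate sense (equivalently that its Fourier coefficients at the "forbidden" frequencies $k_1,\dots,k_M$ — coming from the gaps in $\La$ — vanish), becomes the statement that the coefficient vector of $q$ is annihilated by the rows built from $\mathcal R$, i.e. by $(\mathcal A\ -\mathcal B)$ and $(\mathcal B\ \mathcal A)$. Assembling, a nonzero admissible $q$ exists iff $\mathcal M_\La(p)$ has nontrivial kernel, i.e. iff $\operatorname{rank}\mathcal M_\La(p)<2(N-\mu+1)$; this proves the theorem.

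The technical heart, and the step I expect to be the main obstacle, is the reduction from the genuinely \emph{nonlinear} inequality $2\,\Re(\ov{p}q)\pm|q|^2\le\tau$ on $\T$ to the purely \emph{linear} vanishing conditions (a)–(b) — in particular, showing that these necessary conditions on $q$ are also sufficient, after rescaling $q\mapsto\eps q$ for small $\eps>0$. The point is that once $\Re(\ov pq)$ vanishes to order $\mu_j$ at each $\ze_j$ and the divisibility-by-$r$ condition holds, one can factor $2\,\Re(\ov p q)=|r|^2\cdot u$ on $\T$ for some continuous real $u$ (using that $|r(\ze)|^2=\prod|\ze-\ze_j|^{2\mu_j}$ and $\tau/|r|^2$ extends continuously and is bounded below away from $0$ off the $\ze_j$'s, hence everywhere by a compactness/order-of-vanishing argument), so that $|2\Re(\ov pq)|\le C|r|^2\le C'\tau$ and then for small $\eps$ the term $\eps^2|q|^2$ is absorbed. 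One must be careful that $\tau/|r|^2$ is indeed bounded above and below by positive constants on $\T$ — this uses that $\ze_1,\dots,\ze_n$ are \emph{all} the zeros of $\tau$ and that each has multiplicity exactly $2\mu_j$ — and that the finitely many linear conditions indexed by the gaps $k_1,\dots,k_M$ correctly encode $q\in\mathcal P(\La)$ rather than merely $q\in\mathcal P_N$. Handling this factorization cleanly, and matching the bookkeeping so that the free parameters number exactly $N-\mu+1$ (complex), is the crux; everything else is the linear-algebra translation already indicated.
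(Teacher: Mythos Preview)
Your overall strategy is correct and matches the paper's: reduce non-extremality to the existence of a nonzero $q\in\mathcal P(\La)$ satisfying the two inequalities $|q|^2\le C_1(1-|p|^2)$ and $|\Re(\ov pq)|\le C_2(1-|p|^2)$ on $\T$ (this is exactly the paper's preliminary lemma), then translate these into a homogeneous real-linear system whose matrix is $\mathcal M_\La(p)$, and use the rescaling $q\mapsto\eps q$ for the converse. However, your identification of what the blocks of $\mathcal M$ encode is scrambled, and as written the plan would not produce the correct matrix.

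The key point you have tangled is this: the factorization $q=q_0r$ with $q_0\in\mathcal P_{N-\mu}$ is \emph{not} your condition (b); it is an automatic consequence of the first inequality $|q|^2\le C_1\tau$, since $\tau$ vanishes to order $2\mu_j$ at $\ze_j$, forcing $q$ to vanish to order $\mu_j$ there. Once $q=q_0r$, the \emph{columns} of $\mathcal M$ are indexed by the real and imaginary parts of the coefficients of $q_0$ (this is why there are $2(N-\mu+1)$ of them), not of $q$. The block built from $\mathcal R$ then encodes nothing about divisibility by $r$; rather, it encodes the lacunarity constraint $q\in\mathcal P(\La)$, i.e.\ $\widehat q(k_\nu)=0$ for $\nu=1,\dots,M$, rewritten via $\widehat q(k_\nu)=\sum_{l}\widehat{q_0}(l)\,\widehat r(k_\nu-l)$ as linear conditions on the coefficients of $q_0$. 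Your description of (b) as ``$\ov z^\ga p\,q$ divisible by $r$, equivalently Fourier coefficients at the forbidden frequencies vanish'' conflates two unrelated things.

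Similarly, your condition (a) has the wrong order: from $|\Re(\ov pq)|\le C_2\tau$ one gets that $\Re(\ov pq)$ vanishes to order $2\mu_j$ (not $\mu_j$) at $\ze_j$. The paper handles this by writing $r(z)=\la z^\ga\prod_j\ph_j(z)^{\mu_j}$ on $\T$ with $\ph_j$ real-valued, so that $\Re(\ov pq)=\prod_j\ph_j^{\mu_j}\cdot\Re(\la z^\ga\ov p\,q_0)$. The second inequality then becomes $|\Re(\la z^\ga\ov p\,q_0)|\le\const\cdot\prod_j|z-\ze_j|^{\mu_j}$, and it is the vanishing of $\Re(\la z^\ga\ov p\,q_0)$ to order $\mu_j$ at each $\ze_j$ that the Wronski rows $(\mathcal U_j\ \mathcal V_j)$ encode---with the $\ell$th column coupling to the $\ell$th coefficient of $q_0$, not of $q$. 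Your function $h=\ov z^\ga p\,q$ and your factorization $2\Re(\ov pq)=|r|^2u$ are both off; the correct picture has one factor of (essentially) $|r|$ pulled out of $\Re(\ov pq)$ and two factors out of $\tau$. Once these identifications are corrected, your plan coincides with the paper's proof.
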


\par Even though the rank condition above may appear somewhat bizarre, it is unlikely that the criterion could be substantially simplified. In fact, even in the nonlacunary polynomial space \eqref{eqn:pollan}, and already for $N=2$, one can find unit-norm polynomials $p_1$, $p_2$ satisfying 
$$1-|p_1(z)|^2=2\left(1-|p_2(z)|^2\right),\qquad z\in\T,$$
and such that $p_1$ is a non-extreme point of the unit ball, while $p_2$ is extreme; see \cite[p.\,720]{DMRL2003} for an example. This means that, even for $\mathcal P_2$, the extreme point criterion cannot be stated in terms of the $\ze_j$'s and $\mu_j$'s alone, so a certain level of complexity seems to be unavoidable.

\section{Proofs of Theorem \ref{thm:fincodim} and Proposition \ref{prop:fincodimca}}

\noindent{\it Proof of Theorem \ref{thm:fincodim}.} Let $f\in H^\infty(\La)$ and $\|f\|_\infty=1$. Assuming \eqref{eqn:logintdiv}, we know that $f$ is an extreme point of $\text{\rm ball}(H^\infty)$ and hence also of the smaller set $\text{\rm ball}(H^\infty(\La))$. 
\par Conversely, assume that \eqref{eqn:logintdiv} fails, so that 
\begin{equation}\label{eqn:logintconv}
\int_\T\log(1-|f(\ze)|)\,|d\ze|>-\infty.
\end{equation}
Then we can find a function $g\in H^\infty$, $g\not\equiv0$, satisfying
\begin{equation}\label{eqn:gdomf}
|g|\le1-|f|
\end{equation}
almost everywhere on $\T$ (e.g., take $g$ to be the outer function with modulus $1-|f|$, as defined by \eqref{eqn:outfun}). Further, letting
$$m:=\#(\Z_+\setminus\La)$$
and recalling the notation $\mathcal P_m$ for the set of polynomials of degree at most $m$, we go on to claim that there exists $p_0\in\mathcal P_m$, $p_0\not\equiv0$, for which $gp_0\in H^\infty(\La)$. To see why, write 
$$\Z_+\setminus\La=\{k_1,\dots,k_m\},$$
where $k_1,\dots,k_m$ are pairwise distinct integers, and consider the linear operator $T:\mathcal P_m\to\C^m$ that acts by the rule
$$Tp:=\left(\widehat{(gp)}(k_1),\dots,\widehat{(gp)}(k_m)\right),\qquad p\in\mathcal P_m.$$
Because $\text{\rm dim}\,\mathcal P_m=m+1$, while the rank of $T$ does not exceed $m$, the rank-nullity theorem (see, e.g., \cite[p.\,63]{Axl}) tells us that $\text{\rm Ker}\,T$, the null-space of $T$, has dimension at least $1$ and is therefore nontrivial. 
\par Now, if $p_0$ is any non-null polynomial in $\text{\rm Ker}\,T$, then 
\begin{equation}\label{eqn:hatgp}
\widehat{(gp_0)}(k_1)=\dots=\widehat{(gp_0)}(k_m)=0,
\end{equation}
and so $gp_0$ is a nontrivial function in $H^\infty(\La)$. We may also assume that $\|p_0\|_\infty\le1$, and together with \eqref{eqn:gdomf} this yields
\begin{equation}\label{eqn:pmineq}
|f\pm gp_0|\le|f|+|g||p_0|\le|f|+|g|\le1
\end{equation}
almost everywhere on $\T$. Consequently, 
$$f\pm gp_0\in\text{\rm ball}(H^\infty(\La))$$ 
and the identity 
\begin{equation}\label{eqn:idfgp}
f=\f12(f+gp_0)+\f12(f-gp_0)
\end{equation}
shows that $f$ is not an extreme point of $\text{\rm ball}(H^\infty(\La))$. \qed

\bigskip\noindent{\it Proof of Proposition \ref{prop:fincodimca}.} Once again, we only have to check that every unit-norm function $f\in C_A(\La)$ satisfying \eqref{eqn:logintconv} is non-extreme in $\text{\rm ball}(C_A(\La))$. 
\par For any such $f$ (and actually for any $f\in C_A$ with $\|f\|_\infty\le1$), condition \eqref{eqn:logintconv} enables us to find a non-null function $g\in C_A$ that obeys \eqref{eqn:gdomf}; see \cite[Chapter 9]{Hof}. Now, using this $g$ in place of its namesake above, while keeping the rest of notation, we can readily adjust the preceding proof to the current situation. Namely, we construct (exactly as before) a polynomial $p_0\in\mathcal P_m$ with $0<\|p_0\|_\infty\le1$ that makes \eqref{eqn:hatgp} true. The product $gp_0$ is then a nontrivial function in $C_A(\La)$, and since \eqref{eqn:pmineq} is again valid, it follows that 
$$f\pm gp_0\in\text{\rm ball}(C_A(\La)).$$
Finally, we infer from \eqref{eqn:idfgp} that $f$ is a non-extreme point of $\text{\rm ball}(C_A(\La))$. \qed

\section{Proof of Theorem \ref{thm:findim}}

We begin by stating and proving a preliminary result. 

\begin{lem}\label{thm:twoconst} Given a finite set $\La\subset\Z_+$, suppose that $p\in\mathcal P(\La)$ and $\|p\|_\infty=1$. The following conditions are equivalent: 
\par{\rm(i)} $p$ is not an extreme point of $\text{\rm ball}(\mathcal P^\infty(\La))$. 
\par{\rm(ii)} There exist positive constants $C_1$, $C_2$ and a non-null polynomial $q\in\mathcal P(\La)$ such that 
\begin{equation}\label{eqn:fircon}
|q|^2\le C_1\left(1-|p|^2\right)
\end{equation}
and 
\begin{equation}\label{eqn:seccon}
|\text{\rm Re}(\ov pq)|\le C_2\left(1-|p|^2\right)
\end{equation}
everywhere on $\T$. 
\end{lem}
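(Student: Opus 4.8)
The plan is to prove the equivalence by unwinding what it means for $p$ to be a non-extreme point of $\text{\rm ball}(\mathcal P^\infty(\La))$ and matching the resulting inequality with the two conditions in (ii). First I would observe that $p$ fails to be extreme if and only if there is a non-null $q\in\mathcal P(\La)$ with $\|p\pm q\|_\infty\le1$, i.e. $|p(\ze)\pm q(\ze)|^2\le 1$ for all $\ze\in\T$. Expanding $|p\pm q|^2=|p|^2\pm2\text{\rm Re}(\ov pq)+|q|^2$ and combining the two sign choices, this is equivalent to the single pointwise inequality
\begin{equation}\label{eqn:combined}
|q|^2+2|\text{\rm Re}(\ov pq)|\le 1-|p|^2\qquad\text{on }\T.
\end{equation}
From \eqref{eqn:combined} one reads off \eqref{eqn:fircon} with $C_1=1$ and \eqref{eqn:seccon} with $C_2=\tfrac12$, so the implication (i)$\Rightarrow$(ii) is immediate (in fact with explicit constants).

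For the converse, suppose (ii) holds with some $q\in\mathcal P(\La)$, $q\not\equiv0$, and constants $C_1,C_2>0$. The issue is that $q$ itself need not satisfy $\|p\pm q\|_\infty\le1$; we only control $|q|^2$ and $|\text{\rm Re}(\ov pq)|$ by $1-|p|^2$ up to multiplicative constants. The remedy is to rescale: replace $q$ by $\eps q$ for a small $\eps>0$. Then $|p\pm\eps q|^2=|p|^2\pm2\eps\,\text{\rm Re}(\ov pq)+\eps^2|q|^2\le |p|^2+2\eps C_2(1-|p|^2)+\eps^2C_1(1-|p|^2)=1-(1-2\eps C_2-\eps^2 C_1)(1-|p|^2)\le 1$ as soon as $2\eps C_2+\eps^2 C_1\le 1$, which certainly holds for $\eps$ small. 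Since $\eps q$ is still a non-null element of $\mathcal P(\La)$, this shows $p\pm\eps q\in\text{\rm ball}(\mathcal P^\infty(\La))$ with $p=\tfrac12(p+\eps q)+\tfrac12(p-\eps q)$ a representation as a midpoint of two distinct points, so $p$ is not extreme. This gives (ii)$\Rightarrow$(i).

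There is really only one point that needs a word of care, and it is the reason the two separate inequalities \eqref{eqn:fircon}–\eqref{eqn:seccon} are used rather than the single combined one \eqref{eqn:combined}: when we later apply the lemma (in the main proof of Theorem \ref{thm:findim}), the candidate perturbation $q$ will be produced from a linear-algebra / dimension argument in which one naturally controls the real part of $\ov pq$ and the modulus of $q$ by \emph{some} multiple of $\tau=1-|p|^2$, without any a priori bound on the constants; the scaling trick above then converts such a $q$ into an admissible perturbation. So the formulation with arbitrary $C_1,C_2$ is the one that is actually convenient downstream, and the content of the lemma is precisely that this apparently weaker condition is still equivalent to non-extremality. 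I do not anticipate a genuine obstacle here; the only thing to be mildly careful about is that \eqref{eqn:fircon}–\eqref{eqn:seccon} are required to hold \emph{everywhere} on $\T$ (not just a.e.), which is automatic since $p$ and $q$ are polynomials and hence continuous, so $\tau$, $|q|^2$ and $\text{\rm Re}(\ov pq)$ are all continuous functions on $\T$.
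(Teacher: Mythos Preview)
Your proof is correct and follows essentially the same approach as the paper: both directions hinge on the combined inequality $|q|^2+2|\text{\rm Re}(\ov pq)|\le 1-|p|^2$, with (i)$\Rightarrow$(ii) read off directly (same constants $C_1=1$, $C_2=\tfrac12$) and (ii)$\Rightarrow$(i) obtained by rescaling $q\mapsto\eps q$. The paper phrases the rescaling as shrinking $C_1,C_2$ to at most $\tfrac12,\tfrac14$ and then adding, whereas you expand $|p\pm\eps q|^2$ directly, but this is the same computation.
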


\begin{proof} Clearly, (i) holds if and only if there exists a non-null polynomial $q\in\mathcal P(\La)$ for which 
\begin{equation}\label{eqn:ppmqleone}
\|p+q\|_\infty\le1\quad\text{\rm and}\quad\|p-q\|_\infty\le1.
\end{equation}
An obvious restatement of \eqref{eqn:ppmqleone} is that $|p\pm q|^2\le1$ on $\T$; and since 
$$|p\pm q|^2=|p|^2\pm2\text{\rm Re}(\ov pq)+|q|^2,$$
while $\max(a,-a)=|a|$ for all $a\in\R$, we may further rewrite \eqref{eqn:ppmqleone} in the form 
\begin{equation}\label{eqn:ppmqleonebis}
2\left|\text{\rm Re}(\ov pq)\right|+|q|^2\le1-|p|^2.
\end{equation}

\par Now, if \eqref{eqn:ppmqleonebis} is fulfilled for some nontrivial $q\in\mathcal P(\La)$, then \eqref{eqn:fircon} and \eqref{eqn:seccon} are sure to hold (for the same $q$) with $C_1=1$ and $C_2=\f12$.

\par Conversely, suppose $q\in\mathcal P(\La)$ is a nontrivial polynomial that satisfies \eqref{eqn:fircon} and \eqref{eqn:seccon}. Replacing $q$ by $\varepsilon q$ with a suitable $\varepsilon>0$ if necessary, we can arrange it for $C_1$ and $C_2$ to be as small as desired. In particular, we may assume that $C_1\le\f12$ and $C_2\le\f14$. The resulting inequalities 
$$|q|^2\le\f12\left(1-|p|^2\right)$$ 
and 
$$2|\text{\rm Re}(\ov pq)|\le\f12\left(1-|p|^2\right)$$
imply \eqref{eqn:ppmqleonebis} and hence \eqref{eqn:ppmqleone}. 
\end{proof}

\medskip\noindent{\it Proof of Theorem \ref{thm:findim}.} Suppose that $p$ satisfies the hypotheses of the theorem and fails to be an extreme point of $\text{\rm ball}(\mathcal P^\infty(\La))$. By Lemma \ref{thm:twoconst}, we can find a polynomial $q\in\mathcal P(\La)$, $q\not\equiv0$, that makes \eqref{eqn:fircon} and \eqref{eqn:seccon} true for some constants $C_1,C_2>0$. 

\par Now, for each $j\in\{1,\dots,n\}$, we have 
$$1-|p(z)|^2=O\left(|z-\ze_j|^{2\mu_j}\right)$$
as $z(\in\T)$ tends to $\ze_j$. In conjunction with \eqref{eqn:fircon}, this yields 
$$|q(z)|^2=O\left(|z-\ze_j|^{2\mu_j}\right),$$
or equivalently, 
$$|q(z)|=O\left(|z-\ze_j|^{\mu_j}\right)$$
near $\ze_j$. Thus, $q$ has a zero of multiplicity at least $\mu_j$ at $\ze_j$. It follows that $q$ is divisible by the polynomial $r$ given by \eqref{eqn:polyrz}; and since $q\in\mathcal P_N$, while $r\in\mathcal P_\mu$, we see that
\begin{equation}\label{eqn:factofq}
q=q_0r
\end{equation}
for some (non-null) $q_0\in\mathcal P_{N-\mu}$. 

\par Our next step is to exploit \eqref{eqn:seccon}, so as to gain further information about $q_0$. But first we need to derive a more convenient expression for $r$. Given $j\in\{1,\dots, n\}$, we write $\ze_j=e^{it_j}$ and note that, for $z=e^{it}\in\T$, we have the identity 
\begin{equation}\label{eqn:trigiden}
z-\ze_j=e^{it/2}e^{it_j/2}\cdot2i\sin\f{t-t_j}2.
\end{equation}
Here and throughout, it is assumed that 
\begin{equation}\label{eqn:argarg}
t:=\arg z\quad\text{\rm and}\quad t_j:=\arg\ze_j,
\end{equation}
where \lq\lq arg" stands for the principal branch of the argument (i.e., the one with values in $(-\pi,\pi]$). In particular, $t$ (resp., $t_j$) is uniquely determined by $z$ (resp., $\ze_j$), and we put 
$$\ph_j(z):=2\sin\f{t-t_j}2,\qquad z\in\T.$$
Clearly, $\ph_j$ is real-valued and 
\begin{equation}\label{eqn:phidist}
|\ph_j(z)|=|z-\ze_j|,\qquad z\in\T,
\end{equation}
this last property being immediate from \eqref{eqn:trigiden}. 
We then rewrite \eqref{eqn:trigiden} in the form 
\begin{equation}\label{eqn:trigidenbis}
z-\ze_j=iz^{1/2}\ze_j^{1/2}\ph_j(z)
\end{equation}
(with the appropriate determination of the square root). Raising both sides of \eqref{eqn:trigidenbis} to the power $\mu_j$ and taking products yields 
\begin{equation}\label{eqn:newr}
r(z)=\la z^{\ga}\prod_{j=1}^n\left(\ph_j(z)\right)^{\mu_j},\qquad z\in\T,
\end{equation}
where
$$\la:=i^\mu\prod_{j=1}^n\ze_j^{\mu_j/2}$$
and $\ga:=\mu/2$, in accordance with \eqref{eqn:mugadef}. We note that $\la$ is a unimodular constant depending only on the $\ze_j$'s and $\mu_j$'s. 

\par Further, we combine \eqref{eqn:factofq} and \eqref{eqn:newr} to get
$$\text{\rm Re}\left(\ov{p(z)}q(z)\right)=\prod_{j=1}^n\left(\ph_j(z)\right)^{\mu_j}
\text{\rm Re}\left(\la z^{\ga}\ov{p(z)}q_0(z)\right),\qquad z\in\T.$$
In view of \eqref{eqn:phidist}, this implies that
\begin{equation}\label{eqn:repq}
\left|\text{\rm Re}\left(\ov{p(z)}q(z)\right)\right|=\prod_{j=1}^n|z-\ze_j|^{\mu_j}
\left|\text{\rm Re}\left(\la z^{\ga}\ov{p(z)}q_0(z)\right)\right|.
\end{equation}
On the other hand, 
\begin{equation}\label{eqn:tauasymp}
1-|p(z)|^2\asymp\prod_{j=1}^n|z-\ze_j|^{2\mu_j},\qquad z\in\T.
\end{equation}
(As usual, the sign $\asymp$ means that the ratio of the two quantities stays in the interval $[C^{-1},C]$ for some constant $C>1$.) Taking \eqref{eqn:repq} and \eqref{eqn:tauasymp} into account, we now rewrite \eqref{eqn:seccon} as 
\begin{equation}\label{eqn:shvabra}
\left|\text{\rm Re}\left(\la z^{\ga}\ov{p(z)}q_0(z)\right)\right|
\le\const\cdot\prod_{j=1}^n|z-\ze_j|^{\mu_j},\qquad z\in\T.
\end{equation}
Thus, for every $j\in\{1,\dots,n\}$, the function 
$$z\left(=e^{it}\right)\mapsto\text{\rm Re}\left(\la z^{\ga}\ov{p(z)}q_0(z)\right)$$
has a zero of multiplicity at least $\mu_j$ at $\ze_j$. This fact admits an obvious restatement in terms of derivatives; namely, for $j=1,\dots,n$ we have
\begin{equation}\label{eqn:dereqzero}
\text{\rm Re}\left(\la z^{\ga}\ov{p(z)}q_0(z)\right)^{(s)}(\ze_j)=0,\qquad s=0,\dots,\mu_j-1.
\end{equation}
(To keep on the safe side, we recall \eqref{eqn:argarg} and emphasize that the derivatives in \eqref{eqn:dereqzero} are actually taken with respect to $t$ and computed at $t_j$. In particular, differentiation commutes with the real part operator.) Now, we write the polynomial $q_0$ in the form 
\begin{equation}\label{eqn:qalbe}
q_0(z)=\ov\la\sum_{l=0}^{N-\mu}(\al_l+i\be_l)z^l,
\end{equation}
where $\al_l$ and $\be_l$ are real parameters, and plug this expression into \eqref{eqn:dereqzero}. This done, we obtain for each $j\in\{1,\dots,n\}$ the $\mu_j$ equations
$$\sum_{l=0}^{N-\mu}\al_l\,\text{\rm Re}\left(z^{\ga+l}\ov p\right)^{(s)}(\ze_j)
-\sum_{l=0}^{N-\mu}\be_l\,\text{\rm Im}\left(z^{\ga+l}\ov p\right)^{(s)}(\ze_j)=0,$$
or equivalently, 
\begin{equation}\label{eqn:lineq}
\sum_{l=0}^{N-\mu}\text{\rm Re}\left(\ov z^{\ga+l}p\right)^{(s)}(\ze_j)\cdot\al_l
+\sum_{l=0}^{N-\mu}\text{\rm Im}\left(\ov z^{\ga+l}p\right)^{(s)}(\ze_j)\cdot\be_l=0,
\end{equation}
with $s=0,\dots,\mu_j-1$. We have thus a total of $\mu_1+\dots+\mu_n=\mu$ equations here. 

\par Furthermore, we want to recast the condition that $q\in\mathcal P(\La)$ in terms of our $\al_l$'s and $\be_l$'s. Since $\La$ is given by \eqref{eqn:deflambda}, we know that
\begin{equation}\label{eqn:hatqknu}
\widehat q(k_\nu)=0\quad\text{\rm for}\quad\nu=1,\dots,M.
\end{equation}
On the other hand, setting 
$$A_k:=\text{\rm Re}\,\widehat r(k)\quad\text{\rm and}\quad B_k:=\text{\rm Im}\,\widehat r(k),\qquad k\in\Z,$$
we use \eqref{eqn:factofq} and \eqref{eqn:qalbe} to find that 
\begin{equation*}
\begin{aligned}
\widehat q(k_\nu)&=\sum_{l=0}^{N-\mu}\widehat q_0(l)\widehat r(k_\nu-l)\\
&=\ov\la\sum_{l=0}^{N-\mu}(\al_l+i\be_l)\left(A_{k_\nu-l}+iB_{k_\nu-l}\right)
\end{aligned}
\end{equation*}
for each $\nu$. Consequently, \eqref{eqn:hatqknu} can be rephrased by saying that
\begin{equation}\label{eqn:repaeq}
\sum_{l=0}^{N-\mu}\left(A_{k_\nu-l}\,\al_l-B_{k_\nu-l}\,\be_l\right)=0,\qquad\nu=1,\dots,M,
\end{equation}
and
\begin{equation}\label{eqn:impaeq}
\sum_{l=0}^{N-\mu}\left(B_{k_\nu-l}\,\al_l+A_{k_\nu-l}\,\be_l\right)=0,\qquad\nu=1,\dots,M.
\end{equation}

\par Taken together, the $2M+\mu$ equations that appear above as \eqref{eqn:lineq},  \eqref{eqn:repaeq}, and \eqref{eqn:impaeq} tell us that the vector 
\begin{equation}\label{eqn:coevec}
(\al_0,\dots,\al_{N-\mu},\be_0,\dots,\be_{N-\mu})
\end{equation}
belongs to $\text{\rm Ker}\,\mathcal M$, the kernel of the linear map 
$$\mathcal M:\,\R^{2(N-\mu+1)}\to\R^{2M+\mu}$$
given by \eqref{eqn:blockma}. The polynomial $q$ (and hence $q_0$) being non-null, we see that the vector \eqref{eqn:coevec} is nonzero, and so 
\begin{equation}\label{eqn:kernontriv}
\text{\rm Ker}\,\mathcal M\ne\{0\}.
\end{equation}
Now, because 
$$\text{\rm dim}(\text{\rm Ker}\,\mathcal M)+\text{\rm rank}\,\mathcal M=2(N-\mu+1)$$
by virtue of the rank-nullity theorem (see \cite[p.\,63]{Axl}), we may further restate \eqref{eqn:kernontriv} in the form
\begin{equation}\label{eqn:rankless}
\text{\rm rank}\,\mathcal M<2(N-\mu+1).
\end{equation}
To summarize, we have proved that if $p$ is a non-extreme point of $\text{\rm ball}(\mathcal P^\infty(\La))$, then \eqref{eqn:rankless} holds. 

\par The converse is actually true as well, since every step in the above reasoning can be reversed. Indeed, assuming \eqref{eqn:rankless}, we rewrite it as \eqref{eqn:kernontriv} and take \eqref{eqn:coevec} to be any nonzero vector in $\text{\rm Ker}\,\mathcal M$. Then we define the polynomials $q_0$ and $q$, in this order, by means of \eqref{eqn:qalbe} and \eqref{eqn:factofq}. Equations \eqref{eqn:repaeq} and \eqref{eqn:impaeq} yield \eqref{eqn:hatqknu}, and it follows that $q$ is a non-null polynomial in $\mathcal P(\La)$. Moreover, conditions \eqref{eqn:fircon} and \eqref{eqn:seccon} are then fulfilled. In fact, \eqref{eqn:fircon} is immediate from \eqref{eqn:factofq} and \eqref{eqn:tauasymp}, while \eqref{eqn:seccon} is ensured by \eqref{eqn:lineq}. (One should recall that \eqref{eqn:lineq} is expressible as \eqref{eqn:dereqzero} and implies \eqref{eqn:shvabra}, which is equivalent to \eqref{eqn:seccon}.) Finally, we invoke Lemma \ref{thm:twoconst} to conclude that $p$ is not an extreme point of $\text{\rm ball}(\mathcal P^\infty(\La))$. 

\par Now we know that a unit-norm polynomial $p\in\mathcal P^\infty(\La)$ is a non-extreme point of the unit ball if and only if the associated matrix $\mathcal M=\mathcal M_\La(p)$ satisfies \eqref{eqn:rankless}. In other words, the extreme points---other than monomials---are characterized by the condition 
$$\text{\rm rank}\,\mathcal M=2(N-\mu+1).$$
The proof is complete. 
\qed

\medskip


\begin{thebibliography}{12}

\bibitem{AL} \'E. Amar and A. Lederer, {\it Points expos\'es de la boule unit\'e de $H^\infty(D)$}, C. R. Acad. Sci. Paris S\'er. A \textbf{272} (1971), 1449--1452. 

\bibitem{Axl} S. Axler, {\it Linear algebra done right}, Third edition, Springer, Cham, 2015.

\bibitem{dLR} K. de Leeuw and W. Rudin, {\it Extreme points and extremum problems in $H_1$}, Pacific J. Math. \textbf{8} (1958), 467--485.

\bibitem{DPAMS} K. M. Dyakonov, {\it Interpolating functions of minimal norm, star-invariant subspaces, and kernels of Toeplitz operators}, Proc. Amer. Math. Soc. \textbf{116} (1992), 1007--1013. 

\bibitem{DMRL2000} K. M. Dyakonov, {\it Polynomials and entire functions: zeros and geometry of the unit ball}, Math. Res. Lett. \textbf{7} (2000), 393--404. 

\bibitem{DMRL2003} K. M. Dyakonov, {\it Extreme points in spaces of polynomials}, Math. Res. Lett. \textbf{10} (2003), 717--728.

\bibitem{DIEOT} K. M. Dyakonov, {\it Two problems on coinvariant subspaces of the shift operator}, Integral Equations Operator Theory \textbf{78} (2014), 151--154.

\bibitem{DAdv2021} K. M. Dyakonov, {\it Lacunary polynomials in $L^1$: geometry of the unit sphere}, Adv. Math. \textbf{381} (2021), 107607, 24 pp.

\bibitem{DCRM} K. M. Dyakonov, {\it A Rudin--de Leeuw type theorem for functions with spectral gaps}, C. R. Math. Acad. Sci. Paris \textbf{359} (2021), 797--803.

\bibitem{DARX} K. M. Dyakonov, {\it Nearly outer functions as extreme points in punctured Hardy spaces}, Adv. Math. \textbf{401} (2022), 108330, 22 pp.

\bibitem{G} J. B. Garnett, {\it Bounded analytic functions}, Revised first edition, Springer, New York, 2007.

\bibitem{Hof} K. Hoffman, {\it Banach spaces of analytic functions}, Prentice-Hall, Englewood Cliffs, NJ, 1962.

\bibitem{K} P. Koosis, {\it Introduction to $H_p$ spaces}, Second edition, Cambridge Tracts in Mathematics, 115, Cambridge University Press, Cambridge, 1998.

\bibitem{Neu} S. Neuwirth, {\it The maximum modulus of a trigonometric trinomial}, J. Anal. Math. \textbf{104} (2008), 371--396.

\end{thebibliography}
\end{document}